\newcommand{\C}{\mbox{\rm \,l\kern-0.52em C}}
\newcommand{\Ce}{\rm \,l\kern-0.35em C}
\newtheorem{theorem}{Theorem}[section]
\renewenvironment{proof}{{\bf Proof:}}{\mbox{}\hfill $\Box$}
\theoremstyle{definition}
\date{}
\title{A remark about a theorem of Skandalis}
\author{Michael Puschnigg}
\date{}
\begin{document}

\maketitle 

In 1988 Georges Skandalis proved the remarkable

\begin{theorem} \cite{Sk}
Let $\Gamma$ be a word-hyperbolic group with Kazhdan's property $(T)$ and let $C^*_r(\Gamma)$ be its reduced group $C^*$-algebra. Then the canonical homomorphism
$$
C^*_r(\Gamma)\otimes_{max}C^*_r(\Gamma)\,\longrightarrow\,C^*_r(\Gamma)\otimes_{min}C^*_r(\Gamma)
\eqno(1,1)
$$
from the maximal to the minimal tensor square of $C^*_r(\Gamma)$ does not induce an isomorphism of topological $K$-groups:
$$
K_*(C^*_r(\Gamma)\otimes_{max}C^*_r(\Gamma))\,\overset{\neq}{\longrightarrow}\,K_*(C^*_r(\Gamma)\otimes_{min}C^*_r(\Gamma)).
\eqno(1.2)
$$
\end{theorem}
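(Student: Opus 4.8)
The plan is to show that the canonical surjection $q\colon C^*_r(\Gamma)\otimes_{max}C^*_r(\Gamma)\to C^*_r(\Gamma)\otimes_{min}C^*_r(\Gamma)$ already fails to be injective on $K_0$; since the theorem only asserts that the induced map $(1.2)$ is not an isomorphism, this suffices. The obstruction class will be built from the Kazhdan projection pushed into the tensor square along the diagonal, and its survival on the maximal side will be extracted from the $\gamma$-element of the hyperbolic group $\Gamma$.

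First I would construct the class. By property $(T)$ the trivial representation is isolated, so there is a central Kazhdan projection $p\in C^*_{max}(\Gamma)$ giving a direct-sum decomposition $C^*_{max}(\Gamma)=\C p\oplus\ker\varepsilon$ for the trivial representation $\varepsilon$; hence $[p]$ generates a $\Z$-summand of $K_0(C^*_{max}(\Gamma))$, while $\lambda_*[p]=0$ in $K_0(C^*_r(\Gamma))$ since $\lambda(p)$ is the projection onto the space of $\Gamma$-invariant vectors in $\ell^2\Gamma$, which is zero as $\Gamma$ is infinite. Let $\delta\colon C^*_{max}(\Gamma)\to C^*_r(\Gamma)\otimes_{max}C^*_r(\Gamma)$ be the $*$-homomorphism determined, via the universal property of $C^*_{max}(\Gamma)$, by $g\mapsto\lambda_g\otimes\lambda_g$, and put $x:=\delta_*[p]$. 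It is worth noting why one cannot simply use the Kazhdan projection of $\Gamma\times\Gamma$: every representation of $C^*_r(\Gamma)\otimes_{max}C^*_r(\Gamma)$ is given by a pair of commuting representations of $C^*_r(\Gamma)$, each weakly contained in $\lambda$, so none weakly contains the trivial representation of $\Gamma\times\Gamma$, and that projection vanishes already in $C^*_r(\Gamma)\otimes_{max}C^*_r(\Gamma)$. The diagonal image of the one-variable projection is the right candidate.

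For the minimal side: since $\lambda\otimes\lambda$ is a multiple of the regular representation, the rule $\lambda_g\mapsto\lambda_g\otimes\lambda_g$ extends to a coproduct $\Delta\colon C^*_r(\Gamma)\to C^*_r(\Gamma)\otimes_{min}C^*_r(\Gamma)$; as $q\circ\delta$ and $\Delta\circ\lambda$ agree on group elements, $q\circ\delta=\Delta\circ\lambda$, whence $q_*(x)=\Delta_*\lambda_*[p]=0$. The heart of the matter is to prove $x\ne 0$. As a first, purely representation-theoretic step, $\delta(p)\ne 0$: under $\sigma\colon C^*_r(\Gamma)\otimes_{max}C^*_r(\Gamma)\to B(\ell^2\Gamma)$, $\lambda_g\otimes\lambda_h\mapsto\lambda_g\rho_h$, the composite $\sigma\circ\delta$ is the conjugation representation of $\Gamma$ on $\ell^2\Gamma$, which fixes the vector $\delta_e$, so $\sigma(\delta(p))$ is a nonzero finite-rank projection $P_0$; and $\sigma$ does not factor through $q$, because the quasi-regular representation of $\Gamma\times\Gamma$ on $\ell^2\big((\Gamma\times\Gamma)/\Delta\Gamma\big)\cong\ell^2\Gamma$ is not weakly contained in the regular representation, the diagonal $\Delta\Gamma\cong\Gamma$ being non-amenable. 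To upgrade this to a $K$-theoretic statement I would invoke that $\Gamma$ is hyperbolic: such a group carries a $\gamma$-element --- Dirac and dual-Dirac classes factoring through a proper $\Gamma$-$C^*$-algebra --- and (Lafforgue; Mineyev--Yu) satisfies the Baum--Connes conjecture with coefficients. The decisive structural point is that full and reduced crossed products coincide for proper actions, so the dual-Dirac construction yields $K$-homology classes of $C^*_r(\Gamma)\otimes_{max}C^*_r(\Gamma)$ that are genuinely of maximal type, i.e.\ not pulled back along $q$; pairing such a class against $x$ and transporting back along $\delta$ reduces to a pairing on $C^*_{max}(\Gamma)$ whose value on $[p]$ is computed by the trivial representation and equals $\pm1$. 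Equivalently, one computes $[P_0]$ in $K_0$ of the algebra $C^*(\lambda(\Gamma),\rho(\Gamma))\supseteq\Kh(\ell^2\Gamma)$ by means of the boundary action of $\Gamma$ on $\partial\Gamma$ and the index sequence of $0\to\Kh(\ell^2\Gamma)\to C^*(\lambda(\Gamma),\rho(\Gamma))\to C^*(\lambda(\Gamma),\rho(\Gamma))/\Kh(\ell^2\Gamma)\to 0$, hyperbolicity being what keeps $[P_0]$ nonzero. With $x\ne 0$ and $q_*(x)=0$ in hand, $(1.2)$ is not injective, hence not an isomorphism.

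The main obstacle is precisely this last detection. Property $(T)$ contributes only the projection $p$ and the identities $\varepsilon(p)=1$, $\lambda(p)=0$; moreover no trace and no character of $C^*_r(\Gamma)\otimes_{max}C^*_r(\Gamma)$ can detect $x$ (there are no characters, since $C^*_r(\Gamma)$ has none, and one checks directly that no trace works either), so one genuinely needs the equivariant $KK$-theory of the hyperbolic group --- its $\gamma$-element, the coincidence of full and reduced crossed products on proper algebras, and Baum--Connes with coefficients --- to make the diagonal image of the Kazhdan class visible in $K$-theory. The construction of $x$, the vanishing on the minimal side, and the concluding implication are all formal.
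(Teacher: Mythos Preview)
Your obstruction class $x=\delta_*[p]$ and the vanishing $q_*(x)=0$ agree with Skandalis' setup as the paper recalls it, but you are aiming at more than the statement demands. Theorem~0.1 only asserts that $(1.2)$ is not an isomorphism, and Skandalis' proof (diagram $(1.6)$ in the paper) obtains this from the weaker fact $K_0(J)\otimes_{\mathbb Z}{\mathbb Q}\neq 0$, where $J=\ker q$: the biregular representation restricts to a $*$-homomorphism $J\to\mathcal{K}(\ell^2\Gamma)$, the projection $\delta(p)\in J$ maps to your nonzero finite-rank projection $P_0$, and $[P_0]\neq 0$ in $K_0(\mathcal{K}(\ell^2\Gamma))\otimes_{\mathbb Z}{\mathbb Q}\cong{\mathbb Q}$ forces $[\delta(p)]\neq 0$ in $K_0(J)\otimes_{\mathbb Z}{\mathbb Q}$. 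A nontrivial $K_*(J)$ already rules out $(1.2)$ being an isomorphism, by the six-term exact sequence. No $\gamma$-element, no dual-Dirac, and no Baum--Connes are needed for Theorem~0.1; the only structural input is the left square of $(1.6)$.

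What you actually attempt is the paper's Theorem~0.3, namely $x\neq 0$ in $K_0(C^*_r(\Gamma)\otimes_{max}C^*_r(\Gamma))\otimes_{\mathbb Z}{\mathbb Q}$, and there your detection step is a genuine gap. The sketch you give --- a dual-Dirac $K$-homology class on the maximal tensor square whose pairing with $x$ pulls back along $\delta$ to $\varepsilon(p)=1$ --- is not substantiated: you do not construct such a class, and the $\gamma$-machinery by itself cannot supply it (indeed $\gamma\neq 1$ here, so dual-Dirac does not invert Dirac). Your ``equivalent'' route via $C^*(\lambda(\Gamma),\rho(\Gamma))$ is equally incomplete: the issue is not whether $[P_0]$ is nonzero in $K_0(\mathcal{K})$ (it obviously is) but whether it survives in $K_0$ of the larger algebra, and you do not show this. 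The paper's mechanism is different and cleaner: Lafforgue's Baum--Connes with coefficients (diagram $(1.4)$) gives \emph{surjectivity} of $q_*$ on $K_*$; the six-term sequence for $0\to J\overset{j}{\to}C^*_r(\Gamma)\otimes_{max}C^*_r(\Gamma)\to C^*_r(\Gamma)\otimes_{min}C^*_r(\Gamma)\to 0$ then makes $j_*$ injective on $K_0\otimes_{\mathbb Z}{\mathbb Q}$, and $x=j_*[\delta(p)]\neq 0$ follows from Skandalis' step above. So Lafforgue enters not to build a pairing, but to convert nontriviality in $K_0(J)$ into nontriviality in $K_0$ of the maximal tensor square.
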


He deduced that Kasparov's $\gamma$-element \cite{Ka} in equivariant bivariant $K$-theory is different from the unit element for such groups: 
$$
\gamma\neq 1\,\in\,KK^\Gamma({\mathbb C},{\mathbb C}).
\eqno(1.3)
$$
Skandalis theorem left unanswered the question whether it is the injectivity, the surjectivity, or both that fail for the map (1.2). We observe  

\begin{theorem}
Let $\Gamma$ be a word-hyperbolic group. Then the homomorphism (1.2) is surjective.
\end{theorem}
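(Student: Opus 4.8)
The plan is to factor the homomorphism (1.1) through a maximal group $C^*$-algebra in the first tensor variable, so that on $K$-theory it becomes one leg of a factorisation of the canonical surjection from a maximal to a reduced crossed product by $\Gamma$; surjectivity of that map on $K$-theory would then be a formal consequence of the Baum--Connes conjecture for $\Gamma$. Throughout, put $A:=C^*_r(\Gamma)$, regarded as a $\Gamma$-$C^*$-algebra with the \emph{trivial} action, and let $q\colon C^*_{max}(\Gamma)\to C^*_r(\Gamma)$ be the canonical quotient homomorphism.

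I would begin by recording the standard descriptions of crossed products by a trivial action: a $*$-representation of $A\otimes_{max}C^*_{max}(\Gamma)$ is nothing but a commuting pair consisting of a representation of $A$ and a unitary representation of $\Gamma$, i.e.\ a covariant representation of the trivial dynamical system $(A,\Gamma)$, so that $A\rtimes_{max}\Gamma\cong A\otimes_{max}C^*_{max}(\Gamma)$; and, spatially, $A\rtimes_r\Gamma\cong A\otimes_{min}C^*_r(\Gamma)$. Under these isomorphisms the canonical surjection $\lambda_A\colon A\rtimes_{max}\Gamma\to A\rtimes_r\Gamma$ is identified with the composite
$$
A\otimes_{max}C^*_{max}(\Gamma)\ \xrightarrow{\ \mathrm{id}_A\otimes q\ }\ A\otimes_{max}C^*_r(\Gamma)\ \xrightarrow{\ (1.1)\ }\ A\otimes_{min}C^*_r(\Gamma),
$$
because all the maps in sight, together with $\lambda_A$ itself, restrict to the identity on the common dense $*$-subalgebra ${\mathbb C}[\Gamma]\odot A$; moreover, since $A=C^*_r(\Gamma)$, the middle and the right-hand algebra above are precisely $C^*_r(\Gamma)\otimes_{max}C^*_r(\Gamma)$ and $C^*_r(\Gamma)\otimes_{min}C^*_r(\Gamma)$, and the right-hand arrow is exactly the homomorphism (1.1). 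Hence on $K$-theory $(\lambda_A)_*=(1.2)\circ(\mathrm{id}_A\otimes q)_*$, and it suffices to show that $(\lambda_A)_*\colon K_*(A\rtimes_{max}\Gamma)\to K_*(A\rtimes_r\Gamma)$ is surjective.

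For the surjectivity of $(\lambda_A)_*$ I would invoke the one substantial ingredient: a word-hyperbolic group satisfies the Baum--Connes conjecture with coefficients, by the theorems of Mineyev--Yu and of Lafforgue. As $\Gamma$ is countable, $A=C^*_r(\Gamma)$ is a separable $\Gamma$-$C^*$-algebra, so the reduced assembly map $\mu_{A,r}\colon K^{\mathrm{top}}_*(\Gamma;A)\to K_*(A\rtimes_r\Gamma)$ is an isomorphism; since it factors as $\mu_{A,r}=(\lambda_A)_*\circ\mu_{A,max}$ through the maximal assembly map $\mu_{A,max}\colon K^{\mathrm{top}}_*(\Gamma;A)\to K_*(A\rtimes_{max}\Gamma)$, the homomorphism $(\lambda_A)_*$ is split surjective, with right inverse $\mu_{A,max}\circ\mu_{A,r}^{-1}$. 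Combined with the factorisation above, this shows that (1.2) is split surjective -- in particular surjective -- which is the assertion to be proved.

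I expect that, beyond this input, the whole argument is a routine diagram chase, so the genuine obstacle is exactly the Baum--Connes conjecture for $\Gamma$; note that Kazhdan's property $(T)$ is never used, in agreement with the hypotheses. The same proof can equivalently be phrased for the product group $\Gamma\times\Gamma$ with trivial coefficients, using $C^*_r(\Gamma)\otimes_{min}C^*_r(\Gamma)=C^*_r(\Gamma\times\Gamma)$, $C^*_{max}(\Gamma)\otimes_{max}C^*_{max}(\Gamma)=C^*_{max}(\Gamma\times\Gamma)$, and the fact that a direct product of groups satisfying the Baum--Connes conjecture with coefficients again satisfies the conjecture: then (1.1), precomposed with the surjection $C^*_{max}(\Gamma\times\Gamma)\to C^*_r(\Gamma)\otimes_{max}C^*_r(\Gamma)$, becomes the comparison map $C^*_{max}(\Gamma\times\Gamma)\to C^*_r(\Gamma\times\Gamma)$, whose induced $K$-theory map is onto. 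Together with Skandalis' theorem, this would show that for a word-hyperbolic group with property $(T)$ the non-isomorphism in (1.2) is caused solely by the failure of injectivity.
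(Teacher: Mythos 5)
Your argument is correct and is essentially the proof given in the paper: the paper likewise factors the comparison map through $K_*(C^*_{max}(\Gamma)\otimes_{max}C^*_r(\Gamma))$, i.e.\ through the maximal assembly map with coefficients in $C^*_r(\Gamma)$ carrying the trivial action, and deduces surjectivity of (1.2) from Lafforgue's theorem that the reduced assembly map with coefficients is an isomorphism for word-hyperbolic groups. Your explicit identification of the crossed products with the tensor products and the remark about split surjectivity merely spell out what the paper's diagram (1.4) encodes.
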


\begin{proof}
Let $\Gamma$ be a word-hyperbolic group.
The full and reduced assembly maps \cite{Ka} with coefficients in the $C^*$-algebra $C^*_r(\Gamma)$, equipped with the trivial $\Gamma$-action, yield the commutative diagram
$$
\begin{array}{ccc}
K_*^{\Gamma}(\underline{E\Gamma},C^*_r(\Gamma)) & 
\overset{\mu_{max}}{\longrightarrow} & K_*(C^*_{max}(\Gamma)
\otimes_{max}C^*_r(\Gamma)) \\
& & \downarrow \\
\parallel & & K_*(C^*_r(\Gamma)\otimes_{max}C^*_r(\Gamma)) \\
& & \downarrow \\
K_*^{\Gamma}(\underline{E\Gamma},C^*_r(\Gamma)) & \overset{\mu_{red}}{\longrightarrow} & K_*(C^*_r(\Gamma)\otimes_{min}C^*_r(\Gamma)) \\
\end{array}
\eqno(1.4)
$$
According to Lafforgue \cite{La}, the Baum-Connes conjecture with coefficients holds for word-hyperbolic groups, so that the lower horizontal map in the previous diagram is an isomorphism. It follows that the lower vertical map on the 
right hand side of the diagram is surjective, which is the content of theorem (0.2).
\end{proof}

\newpage

Having a closer look at the proof of Skandalis' theorem we observe

\begin{theorem}
Let $\Gamma$ be a word-hyperbolic group with Kazhdans Property $(T)$ and let $p_{Kaz}\in C^*_{max}(\Gamma)$ be the Kazhdan projection. (It is characterized by the fact that for a unitary representation $(\pi,{\mathcal H})$ of $\Gamma$ the operator $\pi(p_{Kaz})$ equals the orthogonal projection onto the $\pi(\Gamma)$-fixed vectors in $\mathcal H$.) Let $\Delta:C^*_{max}(\Gamma)\,\to\,C^*_r(\Gamma)\otimes_{max}C^*_r(\Gamma)$ be the homomorphism of $C^*$-algebras induced 
by the diagonal map $\Gamma\to\Gamma\times\Gamma$. Then
$$
[\Delta(p_{Kaz})]\neq 0\,\in\,K_0(C^*_r(\Gamma)\otimes_{max} C^*_r(\Gamma))\otimes_{\mathbb Z}{\mathbb Q}.
\eqno(1.5)
$$
Moreover this element lies in the kernel of the homomorphism (1.2).
\end{theorem}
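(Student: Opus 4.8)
The plan is to separate the assertion into two parts: (I) the projection $\Delta(p_{Kaz})$ maps to $0$ under the canonical homomorphism $C^*_r(\Gamma)\otimes_{max}C^*_r(\Gamma)\to C^*_r(\Gamma)\otimes_{min}C^*_r(\Gamma)$, so that its class lies in the kernel of $(1.2)$; and (II) $[\Delta(p_{Kaz})]\neq 0$ in $K_0(C^*_r(\Gamma)\otimes_{max}C^*_r(\Gamma))\otimes_{\mathbb Z}\mathbb Q$. (Here $\Gamma$ is infinite, the statement being empty for finite $\Gamma$ since then $\gamma=1$.) Part (I) is elementary; part (II) is the substance, and is in effect a re‑reading of Skandalis' proof of Theorem $0.1$. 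It is convenient to record that $\Delta$ factors as $C^*_{max}(\Gamma)\to C^*_{max}(\Gamma)\otimes_{max}C^*_{max}(\Gamma)\to C^*_r(\Gamma)\otimes_{max}C^*_r(\Gamma)$, the first arrow induced by the diagonal and the second by reducing both factors, and that it also factors through $C^*_r(\Gamma)\rtimes_{max}\Gamma=C^*_r(\Gamma)\otimes_{max}C^*_{max}(\Gamma)$ by reducing only the factor carrying the group algebra; write $\widetilde\Delta\colon C^*_{max}(\Gamma)\to C^*_r(\Gamma)\rtimes_{max}\Gamma$, $g\mapsto\lambda(g)\,u_g$, for this twisted diagonal, so that $\Delta$ is $\widetilde\Delta$ followed by the reduction $C^*_r(\Gamma)\rtimes_{max}\Gamma\to C^*_r(\Gamma)\otimes_{max}C^*_r(\Gamma)$.

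For part (I): composing $\Delta$ with $(1.2)$ yields the representation $C^*_{max}(\Gamma)\to C^*_r(\Gamma)\otimes_{min}C^*_r(\Gamma)\subset B(\ell^2\Gamma\otimes\ell^2\Gamma)$ sending $g$ to $\lambda(g)\otimes\lambda(g)$. By Fell's absorption principle — concretely, the multiplicative unitary $W$ with $W\delta_{(a,b)}=\delta_{(a,ab)}$ satisfies $W(\lambda(g)\otimes 1)W^{*}=\lambda(g)\otimes\lambda(g)$ — this representation equals $\mathrm{Ad}_W\circ(q(\,\cdot\,)\otimes 1)$, with $q\colon C^*_{max}(\Gamma)\to C^*_r(\Gamma)$ the canonical surjection. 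Since $\Gamma$ is infinite, $\ell^2\Gamma$ has no non-zero $\Gamma$-invariant vector, so $q(p_{Kaz})=\lambda(p_{Kaz})=0$; hence $\Delta(p_{Kaz})$ already vanishes in $C^*_r(\Gamma)\otimes_{min}C^*_r(\Gamma)$, and $[\Delta(p_{Kaz})]$ lies in the kernel of $(1.2)$. The same mechanism — the group part of the regular representation of any reduced crossed product $A\rtimes_r\Gamma$ is a multiple of $\lambda$, so $p_{Kaz}$ has zero image in $A\rtimes_r\Gamma$ — reappears below.

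For part (II): first recall the property-$(T)$ facts. The projection $p_{Kaz}$ is central in $C^*_{max}(\Gamma)$ with $p_{Kaz}\,C^*_{max}(\Gamma)=\mathbb C\,p_{Kaz}$, so $C^*_{max}(\Gamma)=\mathbb C\oplus C^*_{max}(\Gamma)_{0}$ with the $\mathbb C$-summand the trivial representation; in particular the character $\varepsilon$ of the trivial representation sends $p_{Kaz}$ to $1$, so $\varepsilon_{*}[p_{Kaz}]=1$ and $[p_{Kaz}]$ is non-torsion in $K_0(C^*_{max}(\Gamma))$. Since $[\Delta(p_{Kaz})]=\Delta_{*}[p_{Kaz}]$, it remains to see that $\Delta_{*}$ does not kill $[p_{Kaz}]$ rationally. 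I would enlarge the diagram $(1.4)$ on the top left by the full assembly map $\mu_{max}$ with coefficients in $C^*_r(\Gamma)$ (trivial $\Gamma$-action), whose target is $K_*(C^*_r(\Gamma)\rtimes_{max}\Gamma)$; reducing the group factor gives $K_*(C^*_r(\Gamma)\otimes_{max}C^*_r(\Gamma))$, a further reduction gives $K_*(C^*_r(\Gamma)\otimes_{min}C^*_r(\Gamma))$, and the composite down the right-hand column is $\mu_{red}$. Because $\Gamma$ is hyperbolic it has Kasparov's $\gamma$-element \cite{Ka}, hence a dual-Dirac element; so $\mu_{max}$ is split injective with image the range of the idempotent $j_{max}(\gamma\otimes 1)$, whose complement is the kernel of $j_{max}(\gamma\otimes 1)$ on $K_*(C^*_r(\Gamma)\rtimes_{max}\Gamma)$. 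Two facts then hold. (a) $[\widetilde\Delta(p_{Kaz})]$ lies in this complementary "$(1-\gamma)$-summand": indeed $\gamma$ factors through $C_0(\underline{E\Gamma})$ and the $\Gamma$-action there is proper, so the Kasparov module underlying $j_{max}(\gamma\otimes 1)$ has its left action factoring through a reduced crossed product, in which — by the mechanism at the end of part (I) — the image of $\widetilde\Delta(p_{Kaz})$ vanishes (its image is $1_{L^2(\underline{E\Gamma})}$ tensored with the image of $\Delta(p_{Kaz})$ in $C^*_r(\Gamma)\otimes_{min}C^*_r(\Gamma)$, which is $0$ by part (I)). (b) By Lafforgue's theorem \cite{La}, $\mu_{red}$ is an isomorphism; since the reducing homomorphisms are natural for the $\gamma$-element, this means precisely that $\gamma\otimes 1$ acts as the identity on $K_*(C^*_r(\Gamma)\otimes_{min}C^*_r(\Gamma))$. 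Combining (a), (b), the commutativity of the enlarged $(1.4)$ and part (I), a diagram chase shows that if $[\Delta(p_{Kaz})]$ were rationally trivial the $(1-\gamma)$-summand would be rationally trivial on the Kazhdan class, whence $[p_{Kaz}]$ would be torsion in $K_0(C^*_{max}(\Gamma))$ — contradicting $\varepsilon_{*}[p_{Kaz}]=1$. This implication is exactly Skandalis' argument \cite{Sk}; the one new point is that the kernel class it produces is the explicit projection $\Delta(p_{Kaz})$.

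I expect the genuine obstacle to be the non-triviality half of that last chase: showing that $[\widetilde\Delta(p_{Kaz})]$, equivalently $[\Delta(p_{Kaz})]$, is itself non-zero and not merely that it lies in the $(1-\gamma)$-summand — i.e. that the Kazhdan class survives in $K$-theory under both reductions $C^*_r(\Gamma)\otimes_{max}C^*_{max}(\Gamma)\to C^*_r(\Gamma)\otimes_{max}C^*_r(\Gamma)\to C^*_r(\Gamma)\otimes_{min}C^*_r(\Gamma)$. This requires following carefully how $\gamma$ interacts with the non-equivariant coefficient algebra $C^*_r(\Gamma)$ under both the maximal and the reduced descent, and is the computation underlying Theorem $0.1$ itself. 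A concrete shadow, useful for orientation though not sufficient on its own, is the representation $\lambda\times\rho\colon C^*_r(\Gamma)\otimes_{max}C^*_r(\Gamma)\to B(\ell^2\Gamma)$ built from the commuting left and right regular representations: it does not factor through $(1.2)$, because the quasi-regular representation of $\Gamma\times\Gamma$ on $\ell^2((\Gamma\times\Gamma)/\Delta\Gamma)\cong\ell^2\Gamma$ is not weakly contained in the regular one ($\Gamma$ being non-amenable), and it carries $\Delta(p_{Kaz})$ to the orthogonal projection onto the $\ell^2$-invariants of the conjugation representation, that is to the rank-one projection onto $\mathbb C\delta_e$ (here $\Gamma$ is ICC). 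So $\Delta(p_{Kaz})$ is genuinely non-zero as an element and detects the gap between the two tensor norms; promoting this to non-vanishing in $K_0\otimes_{\mathbb Z}\mathbb Q$ is precisely what sends one back to the $\gamma$-element argument above.
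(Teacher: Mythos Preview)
Your part (I) is correct and coincides with the paper's argument. Your part (II), however, has a genuine gap that you yourself flag in the final paragraph: the ``diagram chase'' you invoke does not establish $[\Delta(p_{Kaz})]\neq 0$ rationally. Knowing that $[\widetilde\Delta(p_{Kaz})]$ sits in the $(1-\gamma)$-summand of $K_*(C^*_r(\Gamma)\rtimes_{max}\Gamma)$ and that $\mu_{red}$ is an isomorphism gives no contradiction from the hypothesis $[\Delta(p_{Kaz})]=0$: there is no map back to $K_0(C^*_{max}(\Gamma))$ through which you could deduce that $[p_{Kaz}]$ is torsion. The implication you attribute to Skandalis simply is not there.

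The paper's proof is much more direct, and the irony is that the decisive tool is exactly the ``concrete shadow'' you write down and then dismiss. Skandalis' diagram (1.6) shows that the biregular representation $\lambda\times\rho:C^*_r(\Gamma)\otimes_{max}C^*_r(\Gamma)\to\mathcal L(\ell^2\Gamma)$, when restricted to the kernel $J=\ker\bigl(C^*_r(\Gamma)\otimes_{max}C^*_r(\Gamma)\to C^*_r(\Gamma)\otimes_{min}C^*_r(\Gamma)\bigr)$, lands in $\mathcal K(\ell^2\Gamma)$. Since $\Delta(p_{Kaz})\in J$ by your part (I) and its image under $\lambda\times\rho$ is the rank-one projection onto $\mathbb C\delta_e$, one gets $[\Delta(p_{Kaz})]\neq 0$ in $K_0(J)\otimes_{\mathbb Z}\mathbb Q$ via $K_0(\mathcal K)\otimes_{\mathbb Z}\mathbb Q\simeq\mathbb Q$. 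The remaining step is to pass from $K_0(J)$ to $K_0(C^*_r(\Gamma)\otimes_{max}C^*_r(\Gamma))$, and here Theorem~0.2 is used, not to feed a $\gamma$-element argument, but in the most pedestrian way: surjectivity of (1.2) in both degrees forces the boundary maps in the six-term sequence of the extension $0\to J\xrightarrow{j}C^*_r(\Gamma)\otimes_{max}C^*_r(\Gamma)\to C^*_r(\Gamma)\otimes_{min}C^*_r(\Gamma)\to 0$ to vanish, so $j_*$ is (rationally) injective. That is the entire argument. Your representation $\lambda\times\rho$ \emph{is} sufficient once you notice it maps $J$ into the compacts and pair it with Theorem~0.2.
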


\begin{proof}
We recall Skandalis' proof of theorem (0.1) \cite{Sk}. He establishes a commutative diagram of $C^*$-algebras
$$
\begin{array}{cccccccccc}
0 & \to & J & \overset{j}{\to} & 
C^*_r(\Gamma)\otimes_{max}C^*_r(\Gamma) & 
& \to & C^*_r(\Gamma)\otimes_{min}C^*_r(\Gamma) & \to & 0\\
&&&&&&&&& \\
&& \downarrow && \downarrow &&&&& \\
&&&&&&&&&\\
&&{\mathcal K}(\ell^2(\Gamma)) & \to & {\mathcal L}(\ell^2(\Gamma)) &&&&& \\
\end{array}
\eqno(1.6)
$$
with right vertical arrow given by the biregular representation of $\Gamma$ and with exact upper line.
The image of the projection $\Delta(p_{Kaz})$ in $C^*_r(\Gamma)\otimes_{min}C^*_r(\Gamma)=C^*_r(\Gamma\otimes\Gamma)$ is zero (the regular representation of $\Gamma\times\Gamma$ has no invariant vectors), while its image in ${\mathcal L}(\ell^2(\Gamma))$ is not (the adjoint representation of $\Gamma$ has invariant vectors). So $\Delta(p_{Kaz})\in J$ maps to a non-zero projection in ${\mathcal K}(\ell^2(\Gamma))$. Because the class of such a projection is non-trivial in $K_0({\mathcal K}(\ell^2(\Gamma))\otimes_{\mathbb Z}{\mathbb Q}\,\simeq\,{\mathbb Q}$, Skandalis deduces that the class $[\Delta(p_{Kaz})]\in K_0(J)\otimes_{\mathbb Z}{\mathbb Q}$ cannot be zero. As $j_*:\,K_0(J)\otimes_{\mathbb Z}{\mathbb Q}\,\to\,K_0(C^*_r(\Gamma)\otimes_{max}C^*_r(\Gamma))\otimes_{\mathbb Z}{\mathbb Q}$ is injective by theorem (0.2) the assertion follows.
\end{proof}

\end{document}